\newtheorem{theorem}{Theorem}
\newtheorem{lemma}[theorem]{Lemma}
\newtheorem{conjecture}[theorem]{Conjecture}
\newtheorem{proposition}[theorem]{Proposition}
\newtheorem{problem}[theorem]{Problem}
\theoremstyle{remark}
\theoremstyle{definition}
\newcommand{\mfrac}[2]{ { \textstyle{\frac{#1}{#2}} } }
\newcounter{indentedromanrmklistcnt}
\newenvironment{indentedromanrmklist}%
	{\setcounter{indentedromanrmklistcnt}{0}%
	\begin{list}{(\Alph{indentedromanrmklistcnt})}{%
		\usecounter{indentedromanrmklistcnt}%
		\setlength{\topsep}{3pt}%
		\setlength{\itemsep}{3pt}%
		\setlength{\itemindent}{-0.0in}
		\setlength{\leftmargin}{0.5in}} 
	}%
	{\end{list}}%
\newcounter{rmklistcnt}
	{\setcounter{rmklistcnt}{0}%
	\begin{list}{(\arabic{rmklistcnt})}{%
		\usecounter{rmklistcnt}%
		\setlength{\rightmargin}{0.0in}%
		\setlength{\topsep}{3pt}%
		\setlength{\itemsep}{3pt}%
		\setlength{\itemindent}{0.5in}%
		\setlength{\leftmargin}{0.2in}}%
	}%
	{\end{list}}%
\begin{document}


\title{Knights, spies, games and ballot sequences}
\author{Mark Wildon}
\date{\today}
\email{mark.wildon@bristol.ac.uk}
\maketitle

\section{Introduction}

In this paper we solve the \emph{Knights and Spies Problem}:

\begin{quote}
In a room there are $n$ people, each labelled with a unique
number between~$1$ and $n$. 
A person may either be a \emph{knight} or a \emph{spy}. Knights
always tell the truth, while spies may either lie or tell the truth,
as they see fit. Each person in the room knows the identity of everyone
else. Apart from this, all that is known is 
that strictly more knights than spies are present.
Asking only questions of the form:
\begin{center} 
`Person $i$, what is the identity of person $j$?',
\end{center}
what is the least number of questions that will guarantee
to find the true identities of all $n$ people?
\end{quote}

Despite its apparently
recreational character, the Knights and Spies Problem 
is surprisingly deep; it is 
unusual to find such an easily stated
problem that 
can challenge and be enjoyed by professionals and amateurs
alike.\footnote{The author would like to thank Dave Johnson for
telling him about the Knights and Spies Problem in January 2007.\newline
\indent ${}^\star$Part of this work
was 
financially
supported by
the Heilbronn Institute for Mathematical
Research.}
The following remarks introduce some basic ideas
and should clarify its statement.

\subsection{Preliminary remarks}
There is
a simple, if inefficient,
questioning strategy that will
find everyone's identity. 
Assume for the moment 
that $n = 2m$ is even. Given a person $i$, 
if we ask the remaining~$2m-1$ people to state person~$i$'s identity, then the
majority opinion will be correct. For otherwise, the majority consists
of $m$ or more people who have lied, and since only spies can lie, they
must be spies.
With a small extension
to deal with ties in the case when $n$ is odd, this gives
us a  strategy that 
finds everyone's identity in
$n(n-1)$ questions. 

We may refine this strategy by noting that anyone who
ever holds a minority view is a liar. Such people
can be immediately identified as spies, and 
then
ignored as a potential source of information.
Moreover, once we have found
a knight, we may bombard him with questions
to find all the remaining identities. 
However,
even with these improvements, 
the number of questions
required in the worst case is still quadratic in $n$.

When this strategy is followed, the spies are
at their most obstructive when they always
tell the truth. 
This phenomenon will be seen in other contexts below.
We may assume, however, that a spy will lie if
asked about his own identity, and so while it is permitted by 
the rules,
there can be no benefit
in asking a person about themselves. Similarly, there
can be no benefit in asking the same question to any person
more than once.

Before reading any further, the reader is invited to
find a questioning strategy that will use at most
$Cn$ questions for some constant $C$. A hint leading to a
strategy for which $C = 2$ is given in this footnote.\footnote{There
is an inductive strategy that starts by putting people into pairs,
leaving one person out if $n$ is odd, and then asking each
member of each pair about the other}
The optimal~$C$ is revealed in the outline below. 

\subsection{Outline}
We shall solve 
the more general problem, where
it is given that at most $\ell$ spies are present for some $\ell$ 
with
$1 \le \ell < n/2$.
We begin in \S 2 by describing the \emph{Spider Interrogation Strategy}, which
guarantees to 
finds everyone's identity using at most
\[ n + \ell - 1 \]
questions. If, as in the original problem, all we know
is that knights are strictly in the majority, then 
$\ell = \lfloor (n-1)/2 \rfloor$,
%
and so the maximum number of questions asked is 
$f(n)$, where $f$ is defined by
\begin{align*}
f(2m-1) &= 3m-3 \\
f(2m) &= 3m-2. 
\end{align*}
 
No matter which numbers they hold, the spies
can force a questioner following the
Spider Interrogation Strategy
 to ask the full $n + \ell - 1$ questions. If however
the spies are constrained to always lie, or to
always answer `spy', then
usually fewer questions are required. We
determine the probability distribution of the
number of questions asked; 
remarkably it
is the same in either case. The proof is bijective,
using two lemmas related to the well-known  ballot counting problem
(see \cite[III.1]{Feller}).

In  \S 3 we prove that 
any questioning strategy will, 
in the worst case, require at least $n + \ell - 1$
questions. 
Hence the answer to our original problem is
that, provided $n \ge 3$, the smallest number of questions
that can guarantee success is $f(n)$. Since
\[ 
0
 \le 3n/2 - f(n) \le 2 \]
for all natural numbers $n$, 
it follows that the optimal constant $C$ is $3/2$.
The proof in \S 3 is presented in terms of an optimal
strategy for the second player in the
two-player game in which the first player poses
questions (in the standard form), and the second supplies
the answers (`knight' or `spy'), with the aim of forcing
her opponent to ask at least $n + \ell - 1$ questions before
she can be sure of everyone's identity. This game
provides a setting for all the problems considered in this paper.

It is natural to ask whether there is
a questioning strategy which never uses more than
$n + \ell - 1$ questions, and will with reasonable probability use
fewer, no matter how cleverly the spies
answer. 
In \S 4 we modify the Spider Interrogation
Strategy to show that such a strategy exists
in the case when~$\ell$ is at most~$\sqrt{n}$.
(Of course, given the result of \S 3, there is always be a
non-zero probability that 
the full number of questions will be required.)
 We then present some evidence for the conjecture 
that such a strategy exists
for all admissible values of $\ell$. 
We end in \S 5
by briefly discussing two further open problems.

\section{The Spider Interrogation Strategy}

\subsection{Description}
The \emph{Spider Interrogation Strategy} has four steps: 
the first step,
in which we hunt for someone who we can guarantee
is a knight, is the key to its workings.
We suppose that at most 
$1 \le \ell < n/2$ of
the $n$ people in the room are spies.

\subsubsection*{Step~1}
Choose any person as a \emph{candidate}. 
Repeatedly ask new people about the candidate until \emph{either}

\begin{indentedromanrmklist}
\setlength{\parindent}{0pt}
\item[(a)]
strictly more people have said that the candidate  
is a spy than have said that he is a knight, \emph{or}

\item[(b)]
$\ell$ people have said that the candidate is a knight.
\end{indentedromanrmklist}

If we end in case (a), with the candidate accused by $a$
different people, then he must have been supported 
by $a-1$
different people. 
Whatever his true identity, it is easily checked that
at least $a$ of the~$2a$ people involved 
are spies. Hence 
if we reject the candidate,
ignore all $2a$ of the people involved so far, 
and replace $\ell$ with~$\ell-a$, we may repeat Step 1
with a smaller problem. Eventually, since spies are in
a strict minority, we must finish in case (b). The
successful candidate is supported by~$\ell$ people,
so must be a knight.

\subsubsection*{Step 2}
Let person $k$ be the 
knight found at the end of Step~1.
All future questions
will be addressed to him. In this step, use him to
identify each person who has not yet been involved
in proceedings, and also each of the rejected candidates from Step~$1$.

\subsubsection*{Step 3}
Let persons $m_1$, \ldots, $m_t$ be the rejected
candidates whose identities were determined in Step~2. 
Suppose that person $m_i$
was accused by $a_i$ people. 


\begin{indentedromanrmklist}
\setlength{\parindent}{0pt}
\item[(a)] If person $m_i$ is a knight, then the $a_i$
people who accused him are spies. 
Identify the $a_i-1$ people who supported 
him.

\item[(b)] If person $m_i$ is a spy, then the $a_i-1$ people
who  supported him are spies. 
Identify the $a_i$ people who accused him. 
\end{indentedromanrmklist}

\subsubsection*{Step 4} 
Finally, identify each person who supported 
person~$k$'s candidacy. Since the people who accused person $k$ must be spies,
everyone's identity is now known.

\medskip

It will  be useful to represent the progress of the
Spider Interrogation Strategy
by a labelled digraph on the set $\{1,2,\ldots,n\}$ in which
we draw an edge from vertex~$i$ to vertex~$j$ if person~$i$ 
has been asked about person~$j$, and label it with
person~$i$'s answer.
We shall refer to
such a graph as a \emph{question graph}. 
Figure~1 overleaf 
shows a typical
question graph after Step~1 of the Spider
Interrogation Strategy. 
Its characteristic structure gives the Spider Interrogation
Strategy its name.

\begin{figure}[h!]\label{fig:spiderex}
\begin{center}
\includegraphics{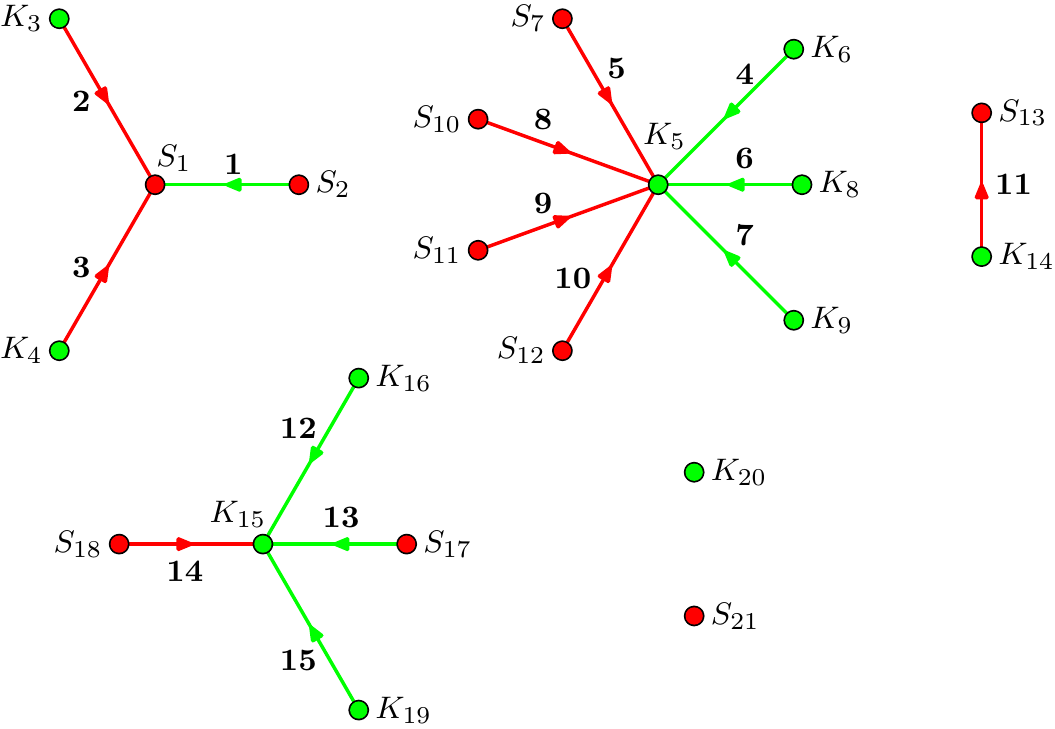}
\end{center}
\caption{The question graph at the end of Step 1
of the Spider Interrogation Strategy in a $21$-person room
with $\ell = 10$. Green arrows show supportive statements and red
arrows show accusations. Questions are numbered in bold.
The  candidates are $S_1$ (rejected), $K_5$ (rejected), $S_{13}$ (rejected) 
and $K_{15}$
(successful).
Spies are assumed to lie in all their answers, except for 
$S_{17}$, who we suppose answers truthfully when asked
about $K_{15}$.  All future questions will
be addressed to the knight~$K_{15}$. For instance, in Step~2 
he will be 
asked about $S_1$, $K_{5}$, $S_{13}$, $K_{20}$ and~$S_{21}$.
The total number of questions asked is~$29$.}
\end{figure}

An
 interesting feature of the Spider Interrogation Strategy, 
already visible in Figure~1, 
is that 
 it guarantees that 
 each spy in the room will be asked
at most one question. 

\medskip 

\subsection{On the number of questions asked}
It is not hard to show that the Spider Interrogation Strategy uses
at most $n + \ell - 1$ questions. 
In fact we can easily prove something
more precise.

\begin{proposition}\label{prop:sis}
The total number of questions asked by a questioner following
the Spider Interrogation Strategy is
\[ n + \ell - 1 - r \]
where $r$ is the number of knights rejected as candidates
in its first step.
\end{proposition}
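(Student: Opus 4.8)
The plan is to count the questions asked in each of the four steps separately and then add them up, exploiting the bookkeeping already built into the description of the strategy. First I would fix notation for the data produced by Step~1: let $m_1,\dots,m_t$ be the rejected candidates, with $m_i$ accused by $a_i$ people (and hence supported by $a_i-1$), and let the successful candidate $k$ be supported by $p$ people and accused by $c$ people. The structural fact I would record at the outset is that $\ell$ is decreased by exactly $a_i$ whenever a candidate is rejected, so $p$ (the number of supporters that confirm $k$ in case~(b)) equals the final value of $\ell$, and $\sum_{i=1}^t a_i = \ell - p$. I would also note that every person questioned in Step~1 is a fresh person, so the number of Step~1 questions equals the number of people involved after Step~1 minus the number of candidates $t+1$.

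With this notation the per-step counts are routine. Step~1 uses $\sum_i(2a_i-1) + (p+c)$ questions, namely the supporters plus accusers of every candidate. Step~2 asks $k$ once about each person not yet involved together with each rejected candidate, which I would write as $(n - I) + t$, where $I = 1 + p + c + \sum_i 2a_i$ is the number of people already involved after Step~1. Step~4 asks $k$ about each of his $p$ supporters.

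The one genuinely informative step is Step~3, and this is where the term $-r$ is born. For a rejected candidate $m_i$ we ask $k$ about the $a_i-1$ supporters if $m_i$ is a knight, but about the $a_i$ accusers if $m_i$ is a spy; thus a spy candidate costs $a_i$ questions while a knight candidate costs only $a_i-1$. Summing over all rejected candidates gives $\sum_i a_i - r$, where $r$ is the number of knights among them — exactly the quantity in the statement. I would then add the four totals and simplify; after substituting $\sum_i a_i = \ell - p$, I expect the auxiliary parameters $p$, $c$, $t$ and $\sum_i a_i$ all to cancel, leaving $n + \ell - 1 - r$.

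The calculation itself is elementary, so the main obstacle is organisational rather than algebraic: I must ensure the sets of people counted across the four steps are genuinely disjoint and exhaust $\{1,\dots,n\}$ exactly once. In particular I would verify that the people deduced to be spies without any further question — namely $k$'s accusers, the accusers of each knight candidate, and the supporters of each spy candidate — are precisely the people never asked about by $k$, and that $k$ himself is the only other unquestioned person. Checking this partition is the step I would treat most carefully, since an off-by-one error there is exactly what would corrupt the constant $-1$ or the coefficient of $r$.
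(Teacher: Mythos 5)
Your count is correct and the algebra does cancel as you predict: with $p = \ell - \sum_i a_i$ the total $\sum_i(2a_i-1)+(p+c)+(n-I)+t+\bigl(\sum_i a_i - r\bigr)+p$ collapses to $n+\ell-1-r$. The treatment of Steps 3 and 4 is exactly the paper's: the $-r$ comes from knight candidates costing $a_i-1$ rather than $a_i$ questions, and Steps 3 and 4 together cost $\ell - r$. Where you differ is in Steps 1 and 2: the paper observes that after Step 2 the underlying question graph is a spanning tree of the $n$ people, so those two steps account for exactly $n-1$ questions with no bookkeeping at all, whereas you count them separately via the parameters $p$, $c$, $t$, $I$ and $\sum_i a_i$ and let them cancel. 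The tree observation buys a one-line proof of the hardest-looking part and makes the constant $-1$ transparent (it is just ``edges of a tree''); your explicit accounting buys a self-contained verification that the people counted in the four steps really do partition the room — the disjointness/exhaustiveness check you flag at the end is genuine work in your version, but it is precisely what the tree argument gets for free.
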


\begin{proof}
After Step 2 is complete, the underlying graph of the question
graph is a tree. Therefore $n-1$ questions have been asked by this point.
The number of questions asked in Step~3 is
$a_1 + \cdots + a_t - r$.
The knight $k$ was accepted after 
$\ell - (a_1 + \cdots + a_t)$ people supported 
him, hence the total number
of questions asked in Steps~3 and~4 is $\ell - r$. The result follows.
\end{proof}

Thus a questioner following the Spider Interrogation
Strategy 
saves one question from the maximum of $n + \ell - 1$
every time a knight is rejected as a candidate. The spies
can easily make sure this never happens, most simply by always
answering
truthfully. If however the spies always lie, or always 
answer `spy',
then is is probable that fewer questions will be required.
We shall refer to these behaviours as \emph{knavish}
and \emph{spyish}, respectively. 

\begin{theorem}\label{thm:strat}
Suppose that there are $k$ knights and $s$ spies randomly
arranged in the room, and that the spies are constrained to act
either knavishly or spyishly.
The probability that a questioner following the Spider Interrogation Strategy 
asks exactly $q$~questions is independent
of the constraint on the spies.
In either case, the expected number of questions
saved is
\[ \frac{1}{\binom{k+s}{s}} \sum_{r=0}^{s-1} \binom{k+s}{r}.\]
\end{theorem}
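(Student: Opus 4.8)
The plan is to reduce the theorem to a purely combinatorial statement about the random arrangement of knights and spies, and then to prove that statement bijectively. By Proposition~\ref{prop:sis}, the number of questions saved equals the number~$r$ of knights rejected as candidates in Step~1, so it suffices to show (i)~that the distribution of~$r$ is the same under the knavish and spyish constraints, and (ii)~that the expectation of~$r$ is the displayed sum divided by~$\binom{k+s}{s}$. The whole analysis therefore concentrates on Step~1, which is the only step where the spies' answers influence the outcome and where candidates are rejected.

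First I would model Step~1 as a sequential process on a random arrangement. Fix the behaviour (knavish or spyish) and imagine revealing the people in the order they are questioned about a candidate. Each time we question a new person about the current candidate, that person's answer (`knight' or `spy') is completely determined by their own type, the candidate's type, and the fixed spy-behaviour; so the sequence of answers is a deterministic function of the sequence of types. A candidate is rejected precisely when the accusations first strictly exceed the supports, and a \emph{knight} candidate is rejected in exactly this situation. I would encode each questioned person as a~$\pm 1$ step---say~$+1$ for a support and~$-1$ for an accusation---so that rejection in case~(a) corresponds to the partial-sum walk first dropping below~$0$. This is exactly the setting of the ballot problem, which is why the paper flags the two ballot-type lemmas: the number and pattern of rejections is governed by the excursions of a lattice path built from the arrangement.

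The key step, and the one I expect to be the main obstacle, is establishing the knavish--spyish symmetry in~(i). The natural route is a bijection on arrangements (or on the associated lattice paths) that preserves the value of~$r$ while interchanging the two behaviours. Under the knavish rule a spy always lies, so a spy supports a spy candidate and accuses a knight candidate; under the spyish rule a spy always says `spy', so a spy accuses any candidate. These produce different step-sequences from the same arrangement, so the symmetry is not literal equality but an identity of distributions, and I would prove it by exhibiting a measure-preserving involution---most likely one that selectively swaps the roles of certain knights and spies along each excursion, in the spirit of the reflection principle---so that the induced push-forward on the value of~$r$ matches. Verifying that this involution is well-defined on the whole sample space, respects the stopping rule~(b) (the process halts once~$\ell$ supports accumulate for some candidate), and genuinely transports one behaviour's law to the other's is where the real care is needed.

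Finally, for~(ii) I would compute~$\mathbb{E}[r]$ directly under whichever behaviour is combinatorially cleaner (spyish, where every spy accuses, seems simplest). Writing~$r$ as a sum of indicators---one for each knight, recording whether that knight is rejected as a candidate---and using linearity of expectation reduces the problem to counting, for each position, the arrangements in which the walk first goes negative while a knight is the candidate. Summing these ballot-type counts over the possible numbers~$r$ of spies that accumulate before acceptance should collapse, after a standard binomial manipulation, to~$\frac{1}{\binom{k+s}{s}}\sum_{r=0}^{s-1}\binom{k+s}{r}$. I expect this last evaluation to be routine once the combinatorial model of Step~1 is correctly set up; the conceptual weight of the proof lies entirely in the bijective symmetry argument of the previous paragraph.
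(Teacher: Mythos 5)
Your reduction is the right one: by Proposition~\ref{prop:sis} everything comes down to the distribution of the number $r$ of rejected knights in Step~1, and the ballot-problem/reflection toolbox is indeed where the proof lives. But the two substantive claims are left unproven. For the knavish--spyish symmetry you say you would exhibit ``a measure-preserving involution \ldots\ that selectively swaps the roles of certain knights and spies along each excursion,'' without constructing it or checking it exists; this is precisely the hard content of the theorem, and the mechanism you gesture at is doubtful (swapping types of people does not obviously preserve the uniform measure on arrangements with $k$ knights and $s$ spies, and it is unclear how it would transport one behaviour's law to the other's). The paper's actual argument is quite different and more specific: it inducts on $s$, notes that the two behaviours differ only when a spy is asked about a spy, reduces to the case where the first candidate is a spy and the first question goes to a spy, and then observes that in the knavish case $r$ is the number of visits from above to level $2$ of a path with $k$ upsteps and $s-2$ downsteps, while in the spyish case it is the number of visits from above to level $0$ of such a path. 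The identity of these two distributions is Lemma~\ref{lemma:ballot}, proved by reflecting the segment of the path between its first and last visits to level $m$ --- a construction that does not appear in your sketch.

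The expectation computation is likewise not ``routine once the model is set up.'' Your proposed sum of per-knight indicators is awkward: whether a given knight is ever a candidate depends on the entire excursion history (knights can be discarded as accusers of rejected spy candidates without ever being candidates themselves), so the indicators do not localize. The paper instead identifies the expected value of $r$ with the expected number of visits to $-1$ from above of a path with $k$ upsteps and $s$ downsteps (via Lemma~\ref{lemma:annoy} and Lemma~\ref{lemma:ballot}), and evaluates that by a second induction-plus-reflection argument (Lemma~\ref{lemma:refl}), reflecting the initial segment up to the first visit to $-1$ to get the recursion $c(k,s) = \binom{k+s}{s-1} + c(k+1,s-1)$. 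Also note that your encoding ($+1$ for support, $-1$ for accusation, per candidate) differs from the paper's global path (upstep when a knight is supported or a spy is accused), and the latter is what makes Lemma~\ref{lemma:annoy} --- the identification of rejected \emph{knights}, as opposed to rejected candidates, with visits to $0$ from above --- come out cleanly; in your per-candidate model you would still need a separate argument to decide which rejected candidates are knights.
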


In particular, if $k = s+1$ then the sum of binomial coefficients 
is $2^{2s} - \binom{2s+1}{s}$, and it follows from Stirling's formula
that the number of questions saved is $\mfrac{1}{2}\sqrt{\pi s} - 1 + o(s)$.
Hence, 
in a large room in which knights are only just in the majority,
a questioner following the Spider Interrogation Strategy can
expect to ask about
\[ \frac{3n}{2} - \sqrt{\frac{\pi}{8}} \sqrt{n}  \]
questions. Another asymptotic result worth noting is that
when $k = 2s$, the sum of binomial coefficients agrees
in the limit  with $\binom{3s}{s}$, and so the expected
number of questions saved tends to~$1$ as $s$ tends to infinity.

Our proof of Theorem~\ref{thm:strat} is bijective, and does not
give an explicit formula for the probabilities involved. (Indeed,
it seems unlikely that any simple such formula exists.)
Some idea of how these  probabilities vary is given by
Figure~10 at the end of \S 4, which shows the results from a computer
simulation of rooms with $51$~knights and $49$~spies. 

\subsection{Paths}
We shall represent 
the sequence of questions asked 
in Step~1 
of the Spider Interrogation Strategy 
by a path
in which we step up every time
a knight is supported or a spy is accused,
and down every time a knight is accused or a spy is
supported. An initial step, which could be thought of
as the candidate implicitly voting for himself, is taken whenever
a new candidate is chosen. 

\begin{figure}[b]\label{fig:spiderpath}
\begin{center}
\includegraphics{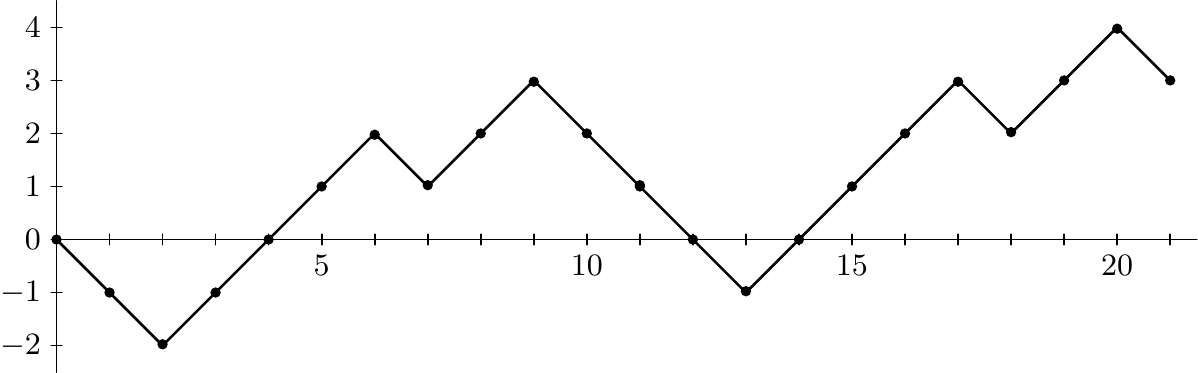}
\caption{The path 
corresponding to Step~1 of the Spider Interrogation Strategy in the
$21$-person room shown in Figure~1. The final two steps 
correspond to extra questions asked to the knight $K_{20}$
and the spy $S_{21}$ about the successful candidate $K_{15}$.
(We have supposed that $S_{21}$ lies.)} 
\end{center}
\end{figure}

Rather than end the path
when a candidate is accepted, we instead imagine that we 
continue to question people about
our accepted candidate until everyone in the room
has either been a candidate, or has been asked a question.
Thus our paths will always have exactly $n$ steps.
  We give each path with a given number of upsteps
and downsteps
the same probability; our extension of paths therefore 
mimics Fermat's
solution of the famous 
\emph{Probl{\`e}me des Points} (see \cite[page~300]{RouseBall}
for an accessible account).
Figure~2 shows the path corresponding to the $21$-person
room in 
Figure~1.

We say that a path \emph{visits $m$ from above at time $r$} if
its height after~$r$ steps is~$m$, and 
its $r$-th step
is downwards. Thus the path shown in 
Figure~2 
visits~$1$
from above exactly twice, at times~$7$ and~$11$. 

\begin{lemma}\label{lemma:annoy}
Let $P$ be a path representing the questions asked in
Step~1 of the Spider Interrogation Strategy. There is a bijective
correspondence between visits of $P$ to $0$ from
above and 
 rejected knights in this step.
\end{lemma}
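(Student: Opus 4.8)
The plan is to read off, from the true identity of each candidate and the verbal content of each answer, exactly how a candidate's segment of the path behaves, and then to show that the path sits at height~$0$ precisely at the ends of the rejected candidates' segments. Fix the convention that the initial step of a segment records the candidate notionally claiming to be a knight: it is an upstep if the candidate is really a knight and a downstep if he is really a spy; thereafter, for a knight candidate each ``knight'' answer is an upstep and each ``spy'' answer a downstep, while for a spy candidate these roles are reversed. Since this depends only on the candidate's true type and the content of each answer, not on who gives it, the argument uses no knavish or spyish assumption.

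First I would check that each rejected candidate's segment has net displacement~$0$, so that, as the first candidate starts at height~$0$, every candidate's segment both begins and ends at height~$0$. For a knight candidate accused by $a$ people the segment consists of one initial upstep, $a-1$ further upsteps and $a$ downsteps; because Step~1 halts the instant the accusations first exceed the supports, the height $1 + (\#\,\text{knight}) - (\#\,\text{spy})$ stays $\ge 1$ throughout the interior and drops to~$0$ only on the terminating downstep. Hence a rejected knight contributes exactly one visit to~$0$, and it is from above. The mirror-image computation shows that a rejected spy's segment stays at height $\le -1$ in its interior and returns to~$0$ on its terminating upstep, giving a visit from below and none from above.

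It then remains to show that the accepted candidate's segment together with the concluding extension never returns to~$0$, so that it contributes no further visits. During the accepted segment the height stays $\ge 1$, since reaching~$0$ would mean the accusations had exceeded the supports, triggering case~(a) and contradicting acceptance. The main obstacle is ruling out a dip to~$0$ during the extension, in which the remaining uninvolved people are asked about the confirmed knight $k$. Every downstep there is a spy lying about a knight, so I would close the gap with a counting argument based on the shrinking bound on spies: each rejected segment with $a_i$ accusers removes at least $a_i$ spies while replacing the working bound by $\ell' = \ell - \sum_i a_i$, so the number of spies surviving into the accepted segment and extension is at most $\ell' $. If $k$ is accused by $x$ people (necessarily spies), its acceptance height is $1 + \ell' - x$, whereas the extension can contain at most $\ell' - x$ downsteps; thus the total descent available after acceptance is strictly smaller than the acceptance height, and the height remains $\ge 1$.

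Combining these facts, the path is at height~$0$ exactly at the ends of the rejected candidates' segments, arriving from above for rejected knights and from below for rejected spies, with the starting point at time~$0$ excluded since it carries no step. This yields the desired bijection between visits of $P$ to~$0$ from above and knights rejected in Step~1.
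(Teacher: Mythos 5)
Your proposal is correct and follows the same route as the paper: the paper's proof merely observes that it suffices to show the path never returns to $0$ after a candidate is accepted, leaving the verification as an exercise, while you supply that verification in full (the segment-by-segment height computation and the counting argument $1+\ell'-x-(\ell'-x)=1$ for the extension). Nothing is missing.
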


\begin{proof}
It suffices to prove that,
once a candidate
has been accepted, the path never returns to $0$.
This is left to the reader as a straightforward exercise.
\end{proof}

We need two further probabilistic lemmas on paths, each
of some independent interest.

\begin{lemma}\label{lemma:ballot}
Let $k \ge s$ and let $p \ge 0$.
The probability that a path with $k$ upsteps and~$s$ downsteps
visits $m$ from above exactly $p$ times
is constant for \hbox{$-1 \le m \le k-s$}.
\end{lemma}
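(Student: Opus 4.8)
The plan is to work with counts rather than probabilities: since all paths with $k$ upsteps and $s$ downsteps are equally likely, it suffices to show that the number $N(m,p)$ of such paths visiting $m$ from above exactly $p$ times is independent of $m$ for $-1 \le m \le k-s$. First I would reformulate the statistic: a visit to $m$ from above is precisely a downstep that lands at height $m$, equivalently a downward crossing of the edge joining heights $m$ and $m+1$. Writing $h = k-s$ for the common final height of every such path, this edge is crossed downward $D_m$ times and upward $U_m$ times, and comparing the initial height $0$ with the final height $h$ gives $U_m = D_m + 1$ for the interior levels $0 \le m \le h-1$, but $U_m = D_m$ for the two boundary levels $m = -1$ and $m = h$. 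This dichotomy is the first thing to keep in view.

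Next I would record one cheap symmetry and reduce everything else to a single adjacency statement. Reversing the step sequence of a path $P$ is an involution on the set of $(k,s)$-paths, and a downstep landing at $m$ in $P$ becomes a downstep landing at $h-1-m$ in the reversed path; hence $N(m,p) = N(h-1-m,p)$, which is the reflection of the range about its midpoint. Because reversal preserves the interior/boundary dichotomy, this alone does not finish the proof, so the real content is an adjacency bijection showing $N(m,p) = N(m+1,p)$ for each $-1 \le m \le h-1$. Chaining these relations (with the reversal symmetry available as a consistency check) yields constancy across the whole range.

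For the adjacency bijection I would cut each path at its crossings of the two edges bordering level $m+1$ and rearrange the resulting pieces. A path with $D_m = p$ decomposes, at the crossings of the edge below level $m+1$, into alternating blocks lying weakly below $m$ and weakly above $m+1$; the below-blocks glue to a single path from $0$ to $m$ staying $\le m$, the above-blocks to a single path from $m+1$ to $h$ staying $\ge m+1$, and $p$ records how the below-part is split at its returns to $m$. The surgery re-slices this data one level higher, trading a unit of net rise from the below-part to the above-part while preserving the number of splitting points; normalising each block by a vertical reflection converts the below- and above-counts into standard ballot (nonnegative path) counts, which is where the ballot connection advertised in the introduction enters.

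I expect the main obstacle to be exactly the boundary-to-interior transition, that is, the adjacency step joining $m=-1$ (where $U_{-1}=D_{-1}$) to $m=0$ (where $U_0 = D_0+1$): here the two levels are governed by crossing patterns of different combinatorial type, since the path starts and ends above the lower edge but below the upper one, so the surgery must create or absorb one extra up-crossing. Verifying that the resulting map is nonetheless a well-defined bijection preserving the value $p$ is the delicate point. The interior adjacencies, by contrast, stay within a single regime and should follow from the same cut-and-reglue construction essentially verbatim.
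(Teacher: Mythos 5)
Your reductions are sound: passing from probabilities to counts, identifying a visit to $m$ from above with a downward crossing of the edge between heights $m$ and $m+1$, and the reversal symmetry $N(m,p)=N(h-1-m,p)$ are all correct. But the heart of the lemma is the adjacency statement $N(m,p)=N(m+1,p)$, and your proposal does not actually construct the bijection that is supposed to prove it. ``Re-slicing the data one level higher, trading a unit of net rise from the below-part to the above-part while preserving the number of splitting points'' describes what a bijection would have to achieve, not a map: you never specify which path is sent to which, and you yourself flag that well-definedness and bijectivity at the transition from $m=-1$ to $m=0$ (where the crossing pattern changes from $U=D$ to $U=D+1$) is ``the delicate point'' left unverified. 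Since your reversal symmetry merely exchanges the two boundary levels, every chain of adjacencies linking a boundary level to an interior one must pass through exactly this transition, so the argument is incomplete precisely where the difficulty lies.

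The paper closes this gap with a single, uniform reflection that is much lighter than your surgery. Given a path $P$, let $b$ and $c$ be the times of its first and last visits to height $m$ (well defined since $0\le m\le k-s$), and let $P'$ be obtained by reflecting the segment of $P$ between times $b$ and $c$ in the line $y=m$. This is an involution on the set of all $(k,s)$-paths, and one checks that $P$ visits $m$ from above exactly as many times as $P'$ visits $m-1$ from above, and vice versa; equidistribution for the pair $(m-1,m)$ follows at once, and the pair $(-1,0)$ needs no special treatment because the reflection is anchored at the level $m$ itself rather than at the edge being crossed, so both crossing regimes are handled identically. To salvage your decomposition approach you would need to write down the reassembly map explicitly and verify bijectivity separately in the two regimes; the reflection does all of this in one stroke.
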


\begin{proof}
Let $0 \le m \le k-s$.
We shall show
that the probabilities agree for~$m-1$ and $m$.
Let $P$ be a path with $k$ upsteps and $s$ downsteps. 
Suppose that the first time $P$
visits~$m$ is after step $b$, and that the
last time $P$ visits~$m$ is after step $c$.
(Since $m \ge k-s$, $b$ and $c$ are well-defined.)
Reflecting
the part of~$P$ between $b$ and~$c$
in the line~$y = m$ gives
a new path, $P'$. Figure~3 
shows this reflection when $m = 1$ for the path in Figure~2.

\begin{figure}[h!]\label{fig:ballot}
\begin{center}
\includegraphics{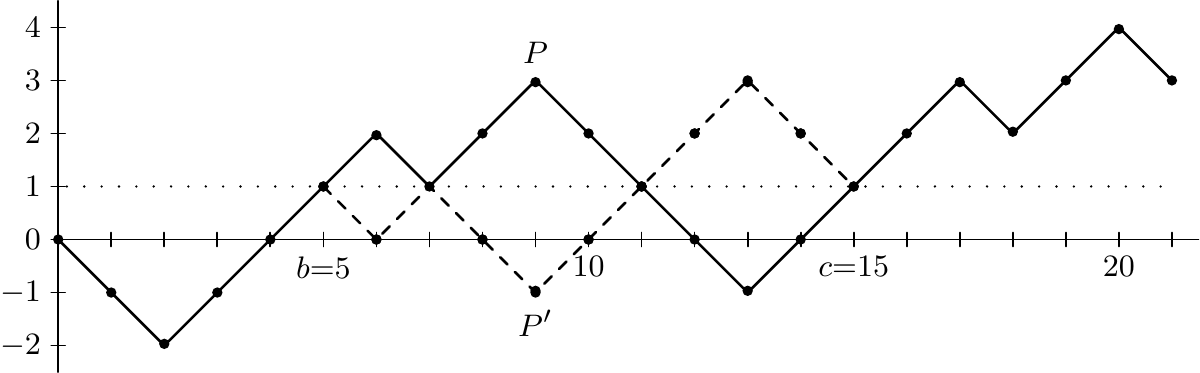}
\end{center}
\caption{The path $P'$ is the reflection of $P$ in the line $y=1$ between
$b=5$ and $c=15$.}
\end{figure}

One easily sees that 
$P$ visits $m$ from above exactly as many
times as $P'$ visits~$m-1$ from above. Similarly
$P$ visits $m-1$ from above exactly as many
times as $P'$ visits $m$ from above.
The result follows.
\end{proof}

\begin{lemma}\label{lemma:refl}
Let $k \ge s$.
The expected number of visits to~$-1$ from above
for a path with~$k$ upsteps and~$s$ downsteps is
\[ \frac{1}{\binom{k+s}{s}} \sum_{r=0}^{s-1} \binom{k+s}{r}.\]
\end{lemma}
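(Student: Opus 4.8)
The plan is to compute the expectation as a tail sum and then reduce everything to a single lattice-path counting identity that can be attacked by the reflection principle, exactly as in the proof of Lemma~\ref{lemma:ballot}.

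First I would record the tail-sum formula for the expectation of a nonnegative integer-valued quantity. Write $V(P)$ for the number of visits of a path $P$ to $-1$ from above, and recall that each of the $\binom{k+s}{s}$ paths with $k$ upsteps and $s$ downsteps is equally likely. Since $V(P)=\sum_{r\ge 1}[V(P)\ge r]$, the expected value of $V$ is
\[ \frac{1}{\binom{k+s}{s}} \sum_{P} V(P) = \frac{1}{\binom{k+s}{s}} \sum_{r \ge 1} N_{\ge r}, \]
where $N_{\ge r}$ denotes the number of such paths with at least $r$ visits to $-1$ from above. Thus it suffices to prove the clean identity
\[ N_{\ge r} = \binom{k+s}{s-r} \qquad (r \ge 1), \]
because then $\sum_{r\ge 1} N_{\ge r} = \sum_{r=1}^{s} \binom{k+s}{s-r} = \sum_{t=0}^{s-1}\binom{k+s}{t}$, which is exactly the claimed numerator.

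The heart of the matter is the identity for $N_{\ge r}$, which I would prove bijectively. A visit to $-1$ from above is precisely a downstep carrying the path from height $0$ to height $-1$, so $N_{\ge r}$ counts paths making at least $r$ such down-crossings of the barrier between $0$ and $-1$. I would biject these with the paths of length $k+s$ finishing at height $-(k-s)-2r$; such paths have $s-r$ upsteps and $k+r$ downsteps, and there are $\binom{k+s}{s-r}$ of them. The mechanism is the reflection principle applied repeatedly: reflecting the tail of the path beyond its first down-crossing in the line $y=-1$ sends the terminal height $k-s$ to $-(k-s)-2$, and turns each later down-crossing of $y=-\tfrac12$ into an up-crossing of the next barrier $y=-\tfrac32$; unfolding across the successive barriers $y=-\tfrac12,\,-\tfrac32,\,-\tfrac52,\dots$ lowers the endpoint by a further $2$ for each of the $r$ crossings, and the whole process is reversible.

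I expect the main obstacle to be making this iterated reflection precise: one must verify that unfolding at successive barriers is genuinely a bijection, in particular that it is invertible so that a path finishing at $-(k-s)-2r$ reconstructs a unique path with at least $r$ crossings, with no over- or under-counting, and that each stage lowers the endpoint by exactly $2$. A safe fallback, should the bookkeeping prove delicate, is to compute $\sum_P V(P)$ directly by conditioning on the location of a down-crossing, giving $\sum_{j\ge0}\binom{2j}{j}\binom{k+s-2j-1}{s-j-1}$, and then to verify the equivalent convolution identity $\sum_{j\ge0}\binom{2j}{j}\binom{k+s-2j-1}{s-j-1}=\sum_{t=0}^{s-1}\binom{k+s}{t}$ by generating functions; but the reflection argument is the natural one here and matches the method already in play. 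Note finally that Lemma~\ref{lemma:ballot} on its own does not settle this, since it only shows that the visit counts are equidistributed across the levels $-1,\dots,k-s$ and leaves their common expectation undetermined.
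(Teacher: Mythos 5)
Your argument is correct, but it follows a genuinely different route from the paper's. The paper proves the identity for the total count $c(k,s)=\sum_{r=0}^{s-1}\binom{k+s}{r}$ by induction on $s$: a \emph{single} reflection (of the initial segment up to the first visit to $-1$, in the line $y=-1$) peels off the term $\binom{k+s}{s-1}$ and leaves the visits to $-1$ from above of paths starting at $(0,-2)$; Lemma~\ref{lemma:ballot} is then used to shift levels and identify that quantity with $c(k+1,s-1)$, and induction closes the loop. You instead expand the expectation as the tail sum $\sum_{r\ge1}N_{\ge r}$ and prove the refined identity $N_{\ge r}=\binom{k+s}{s-r}$ outright by iterated reflection. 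That identity is true --- having $r$ downsteps from $0$ to $-1$ forces the path to touch $-1,0,-1,\dots$ alternately, and composing the $2r-1$ corresponding reflections translates the endpoint from $k-s$ to $-(k-s)-2r$, giving $\binom{k+s}{s-r}$ --- and your tail sum then reproduces the paper's answer term for term; indeed $\binom{k+s}{s-r}$ is precisely the term the paper's recursion extracts at depth $r$. Your version buys strictly more: $N_{\ge r}$ gives the full distribution of the number of visits, not just its mean, and it makes no appeal to Lemma~\ref{lemma:ballot}. What the paper's version buys is that each step needs only one reflection, with Lemma~\ref{lemma:ballot} doing the renormalisation; this sidesteps exactly the bookkeeping you flag as the delicate point. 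If you write out the unfolding, note that the successive reflection \emph{lines} are $y=-1,-2,-3,\dots$ (the $j$-th line being the image of the $j$-th barrier under the earlier reflections), not a single repeated line; with that set up, invertibility is the standard method-of-images argument. The fallback via the convolution $\sum_{j\ge0}\binom{2j}{j}\binom{k+s-2j-1}{s-j-1}$ is also a correct count of (path, visit) pairs, but it would still leave a nontrivial binomial identity to verify, so the reflection route is preferable.
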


\begin{proof}
Let $c(k,s)$ be the total number of times all paths
with $k$ upsteps and~$s$ downsteps visit $-1$
from above. We must prove that
\[ c(k,s) = \sum_{r=0}^{s-1} \binom{k+s}{r} .\]
We work by induction on $s$. If $s=0$ then it is impossible
for any path to visit $-1$, so the result obviously holds in this case.

For the inductive step we 
use reflection in a slightly different way, which is, in fact,
the standard way it is used.\footnote{Feller \cite[Chapter~3]{Feller}, 
gives a good introduction
to this reflection argument and its possible applications.}
Let $P$ be a path with~$k$ upsteps and~$s$ downsteps which visits $-1$
from above at least once. If~$P$ visits $-1$ for the first time
after step~$d$, then reflect the part of $P$ between $0$ and $d$ in
the line~$y=-1$. 
This gives a new path $P^\star$ from 
$(0,-2)$ to $(k+s,k-s)$, as shown in Figure~4 
below. 

\begin{figure}[h!]\label{fig:refl}
\begin{center}
\includegraphics{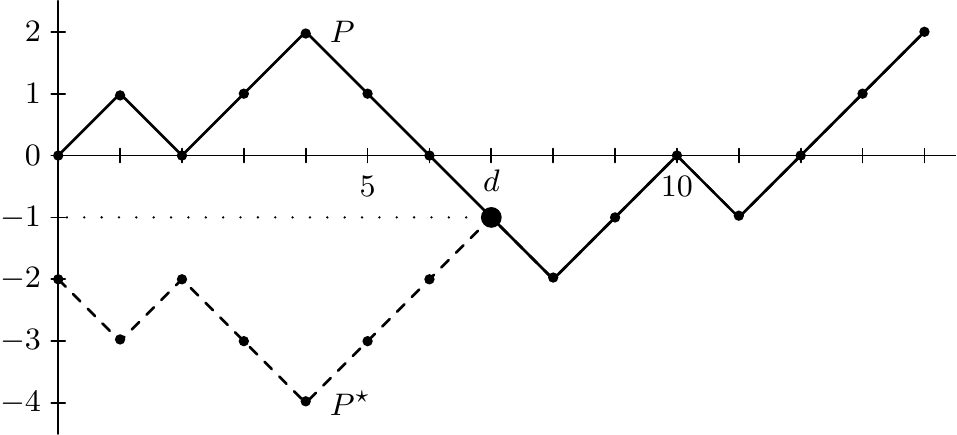}
\end{center}
\caption{The path $P$, which visits $-1$ for the first
time after step $d=7$, is reflected 
to the path $P^\star$
starting at~$(0,-2)$.}
\end{figure}


If $P$  visits~$-1$ from
above exactly $m$ times then~$P^\star$ visits $-1$
from above exactly $m-1$ times. 
Since there are $\binom{k+s}{s-1}$
possible paths $P^\star$ from $(0,-2)$ to $(k+s,k-s)$,
each with $k+1$ upsteps and $s-1$ downsteps, we have
\[ c(k,s) = \binom{k+s}{s-1} + t \]
where $t$ is the total number of times all paths from $(0,-2)$
to \hbox{$(k+s,k-s)$} visit~$-1$ from above. Each such
path has $k+1$ upsteps and $s-1$ downsteps. Shifting to $(0,0)$
and applying Lemma~\ref{lemma:ballot} we see that $t = c(k+1,s-1)$.
The lemma now follows by induction.
\end{proof}

\subsection{Proof of Theorem~\ref{thm:strat}}

The first part of this theorem asserts
that if there are $k$~knights and 
$s$ spies in the room, then the probability
that exactly~$q$ questions are saved is 
independent of whether spies act knavishly 
or spyishly. As in Lemma~\ref{lemma:refl},
we shall work by induction on~$s$.

The two behaviours for the spies differ
only when a spy is asked about another spy, so when
$s = 0$ or $s=1$, the probabilities agree.
When $s \ge 2$ we may use induction to
reduce to the case where the first candidate
is a spy, and the first question is asked to another spy.

Suppose first of all that spies behave knavishly.
Then, in a path corresponding to Step~1 of the Spider Interrogation Strategy,
questions asked to knights correspond to upsteps,
and questions asked to spies correspond to downsteps,
and the first two steps are downwards.
By Lemma~\ref{lemma:annoy}, the probability
that exactly $p$ knights are rejected is equal to the probability
that a path  with $k$ upsteps and $s-2$ downsteps
visits~$2$ from above exactly $p$ times.

Now suppose that spies behave spyishly.
In this case our initial candidate is rejected at question $2$, and
we choose a fresh candidate.
By induction,
we may assume that all the remaining spies in the room behave knavishly.
The remaining questions in Step~1 are 
represented by a path with $k$ upsteps and~\hbox{$s-2$} downsteps.
Hence, the probability that exactly $p$ knights are rejected
is the probability that 
 a path
with $k$ upsteps and~\hbox{$s-2$} downsteps visits~$0$ 
from above exactly $q$ times.

By Lemma~\ref{lemma:ballot}
these two probabilities 
are equal.
Moreover, by Lemma~\ref{lemma:refl},
the expected number of visits
to~$0$ from above of a path with~$k$ upsteps
and~$s$ downsteps is
\[ \frac{1}{\binom{k+s}{s}} \sum_{r=0}^{s-1} \binom{k+s}{r}.\]
When spies act knavishly, this is the expected number of 
questions saved. 
We have just seen that the behaviour of the spies
does not affect the distribution of this quantity, so this is also 
its expected value when spies act spyishly. 
This completes
the proof of Theorem~\ref{thm:strat}.

\section{A lower bound}
In this section we shall prove that
any questioning strategy, will, in the worst case, require
at least
$n + \ell - 1$ questions to find everyone's true identity.

The difficult we face in proving this result is that we must
somehow take into account \emph{every} possible
questioning strategy that may be employed, irrespective
of how bizarre it might seem. This is much the same problem
that confronts a player of a game such as chess or \emph{go},
and so it is perhaps not surprising that it is very helpful
to think of our problem in this context.

\newcommand{\negvthinspace}{\hspace{-1pt}}

\subsection{A mathematical game} 
The game of \emph{`Knights and Spies\negvthinspace'} is played between two players:
an \emph{Interrogator} and a \emph{Secret-Keeper}.
At the start
of the game the players agree on values for the usual parameters $n$
and~$\ell$, with as usual $1 \le \ell < n/2$. 

In a typical turn the Interrogator poses a question (in the
standard form)
to the Secret-Keeper. The Secret-Keeper considers the various ways
in which knights and spies can be arranged in the room 
and then supplies the
answer: `knight' or `spy'. 
The Interrogator's aim 
is, of course, to determine everyone's identity. The Secret-Keeper 
acts as the agent of malign fate and aims to answer
in a way that will inconvenience
the Interrogator as much as possible.

If, 
at the beginning of a turn, the Interrogator believes that she
is certain of everyone's identity, she may \emph{claim} by giving
the full set of people who she believes are spies. 
The Secret-Keeper must then either
\emph{refute} her claim, by exhibiting a different set that is also consistent
with her answers so far, or agree that the secret is out. 
The Interrogator wins if she makes a successful
claim before turn~\hbox{$n + \ell$}, and draws if she makes a successful claim
at the start of turn~\hbox{$n + \ell$} (after asking $n+\ell-1$ questions). In any other event victory goes to
the Secret-Keeper.\footnote{Practical experience 
suggests that it is all too easy for
 the Secret-Keeper
to inadvertently answer in such a way that  
all consistent interpretations of her answers
require strictly more than $\ell$ spies to be present.
Such errors may be
avoided by using the author's program \emph{Gamechecker}, which
makes an exhaustive search for an assignment of identities consistent with 
the Secret-Keeper's responses.
It reports if there is a unique such assignment, so it can also
be used to adjudicate claims by the Interrogator. 
The Haskell source code for \emph{Gamechecker} is available
from the author's website: \url{http://www.maths.bris.ac.uk/~mazmjw}.
It
would be interesting to know if there is a polynomial time
algorithm for deciding whether an incomplete game is in a
consistent state; the back-tracking algorithm used by \emph{Gamechecker}
works well in practice, but in the worst case requires exponential time and space.}

Note that the Secret-Keeper is not committed, even privately, to any 
particular arrangement of knights and spies. 
All that matters is that, 
at every point in the game, there is a way to assign identities to 
the people in the room that is consistent with her answers so far,
and with 
the requirement that at most $\ell$ spies are present.
The small-scale game shown in Figure~5 
should clarify this point.

\vspace{-6pt}
\begin{figure}[h!]\label{fig:exgame}
\begin{center}
\includegraphics{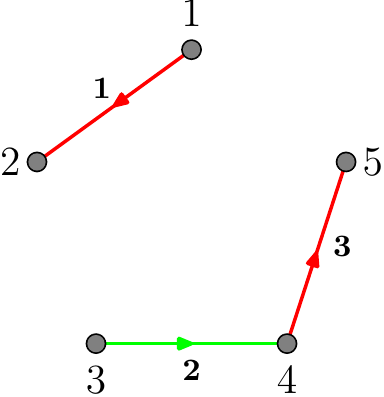}
\end{center}
\caption{The question graph part way through a
novice game in a $5$ person room with $\ell = 2$. Green arrows show supportive statements,
red arrows show accusations.
For instance, in the first turn the
Interrogator asked person 1 about person 2, and the Secret-Keeper accused by replying `spy'.}
\end{figure}

The Secret-Keeper's third reply in this game was a 
blunder, for after it, the Interrogator, reasoning that at
most two spies are present, can be sure that person~$5$ is a spy, 
and also that either person~$1$ or person~$2$ is  a spy. She will therefore
be able to claim after just one more question.
If the Secret-Keeper had instead supported by replying `knight' on
her third turn, then the Interrogator can be held to the target of
six questions; the reader may check that it
is the Secret-Keeper's choice whether
one or two spies appear in
the Interrogator's eventual claim. 

Our required result, that any questioning strategy will,
in the worse case, require at least $n + \ell - 1$ questions,
is equivalent to the following theorem.

\begin{theorem}\label{thm:game} The Secret-Keeper 
has a strategy that ensures 
the Interrogator cannot claim before
she has asked $n + \ell - 1$ questions.
\end{theorem}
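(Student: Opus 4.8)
The plan is to treat the theorem as an adversary argument for the Secret-Keeper, and to reduce ``the Interrogator can claim'' to a clean combinatorial condition on the answers given so far. Call an assignment of identities with at most $\ell$ spies a \emph{configuration}, and say it is consistent with the answers if every person declared to be a knight in that configuration has answered truthfully about everyone she was asked about (spies being unconstrained). Since the Secret-Keeper will always answer so that at least one consistent configuration survives, a claim succeeds exactly when the answers admit a \emph{unique} consistent configuration. Thus the theorem is equivalent to: the Secret-Keeper can answer the first $n+\ell-2$ questions while keeping at least two consistent configurations alive.

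To pin down the exact bound $n+\ell-1$ I would track the potential
\[ d(q) = \#\{\text{people whose identity is the same in every consistent configuration after } q \text{ questions}\}, \]
so that a claim is possible precisely when $d=n$. The strategy is designed to maintain $d(q)\le\max(0,\,q-\ell+1)$; granting this, after $q=n+\ell-2$ questions we have $d\le n-1$, some person is still undetermined, two configurations persist, and no claim is possible. This splits the proof into a \textbf{startup} estimate, that $d$ stays $0$ through the first $\ell-1$ questions, and a \textbf{slow-reveal} estimate, that each later answer can be chosen so that $d$ rises by at most $1$.

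For the startup phase the Secret-Keeper simply answers ``knight'' to every question, so all edges of the question graph are supportive. The empty configuration is then consistent, so every person is a knight in some configuration. Conversely, for any person $v$, taking as spies the ancestor-closure of $v$ under the support edges yields a consistent configuration in which $v$ is a spy: no knight can support a spy, since a support edge from a knight into the closure would place that knight in the closure. After at most $\ell-1$ questions this closure is connected, contains $v$, and has at most $\ell$ vertices, so it is an admissible spy-set. Hence every person is undetermined and $d=0$ throughout, exactly as in the majority reasoning of Step~1 of the Spider Interrogation Strategy, where $\ell$ supporters are needed before a candidate can be trusted.

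The hard part will be the slow-reveal step, and it is here that the Secret-Keeper's answers must be chosen with care: in general a single answer can collapse many ambiguities at once --- for instance a fragile state in which the only configurations are $\varnothing$ and a block of $\ell$ spies would let one question determine all $\ell$ of them simultaneously. I would therefore maintain a stronger invariant on the family of consistent configurations, guaranteeing that the undetermined people remain ``spread out'', so that any one new implication $i\to j$ (with the label chosen optimally by the Secret-Keeper) forces at most one previously undetermined identity while preserving at least one configuration with at most $\ell$ spies. Formulating this robustness invariant, and proving that one of the two possible answers always preserves it and keeps the family nonempty under the $\ell$-spy constraint, is the crux; the final count then follows by combining the startup and slow-reveal estimates as above.
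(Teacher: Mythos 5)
Your reduction of the theorem to ``keep two consistent configurations alive for $n+\ell-2$ questions'' is correct and matches the paper's framing, and your startup estimate is sound: with blanket support for the first $\ell-1$ questions, the empty spy-set is consistent, and for each person $v$ the ancestor-closure of $v$ under support edges is a connected subgraph with at most $\ell-1$ of the edges, hence at most $\ell$ vertices, giving a consistent configuration in which $v$ is a spy; so indeed $d=0$ at that point. (Amusingly, this is the opposite of the paper's Phase~1, which answers the first $\ell-1$ questions with blanket \emph{accusations}.) However, the proposal has a genuine gap exactly where you say the crux lies: the slow-reveal step is the entire content of the theorem, and you neither formulate the ``robustness invariant'' nor prove that one of the two possible answers always preserves it. This is not a routine verification --- a greedy ``answer whichever way determines fewer identities'' rule is not obviously adequate, since (as your own $\varnothing$-versus-$\ell$-block example shows) the family of consistent configurations can become fragile, and one must also guarantee that the surviving configurations respect the $\ell$-spy budget for all future questions. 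Without that invariant the inductive bound $d(q)\le\max(0,q-\ell+1)$ is unsupported.

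For comparison, the paper avoids any potential function or per-question accounting. It gives an explicit global strategy (the Mole Hiding Strategy): after the $\ell-1$ Phase~1 accusations partition the involved people into components $G_1,\ldots,G_c$, the Secret-Keeper supports anyone not yet involved and accuses anyone in a component $G_i$ until only one member of $G_i$ remains unasked-about, whom she then supports. Proposition~\ref{prop:game} then directly exhibits a consistent spy-set $S=G\setminus\{k_1,\ldots,k_c\}$ of size at most $\sum_i(v_i-1)\le\sum_i e_i=\ell-1$, and, for every turn $t\le n+\ell-1$, a second consistent set $S^\star$ obtained by toggling some person $x$ not yet asked about in Phase~2 --- such an $x$ exists by pigeonhole because at most $n-1$ Phase~2 questions have been asked. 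The structural choice of Phase~1 (accusations concentrating the canonical spy-set inside $G$ while leaving one spare spy) is what makes the two-configuration argument go through; your all-support startup would need a correspondingly different, and currently missing, continuation.
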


We refer the reader to \cite[\S 10.1]{vonNeumann} for
a formal axiomatisation of two-player games which is 
more than capable of expressing Theorem~\ref{thm:game}.

\subsection{The Mole Hiding Strategy}
We prove Theorem~\ref{thm:game} by showing that
the following two-phase strategy for the Secret-Keeper
(referred to as the \emph{Mole Hiding Strategy}) will hold the Interrogator
to $n + \ell - 1$ questions. For simplicity, we shall assume that
the Interrogator never 
repeats a question verbatim or asks someone to state his own identity;
 the discussion
in \S 1.1 tells the Secret-Keeper how to reply to such questions,
and shows that this
is not a significant restriction.

\subsubsection*{Phase~1.}
Answer the first $\ell-1$ questions posed by the Interrogator
with blanket accusations. Let $G$ be the subgraph of
the question graph whose vertices correspond to people who
have already been involved in one of the first~$\ell-1$~questions.
Suppose that the underlying
graph of $G$ is the union of the connected components
$G_1, \ldots, G_c$. Let $G'$ be the set of people who have
not yet been involved in proceedings.

\subsubsection*{Phase~2.} Now answer the Interrogator's
questions according to the following rule.
Suppose that
the Interrogator's question asks for 
the identity of person~$j$.
If~$j$ belongs to~$G'$ then support, and
if~$j$ belongs to the component~$G_i$ 
then accuse, \emph{unless} in Phase~2 of the game
the Interrogator has already
asked about everyone else in~$G_i$; in this case, support.

\medskip
An example game in which $n = 12$ and $\ell = 5$ is shown in 
Figure~6 overleaf.  
The subgraph $G = \left\{1,2,3\right\} \cup \left\{4,5\right\}$ 
has two connected components. 
The Interrogator can be sure after
$16$ questions that the only spies present are persons~$2$,~$3$ and~$5$,
but is unable to claim any earlier; the game therefore
ends in a draw.

\begin{figure}[h!]\label{fig:bicyclegame}
\medskip
\includegraphics{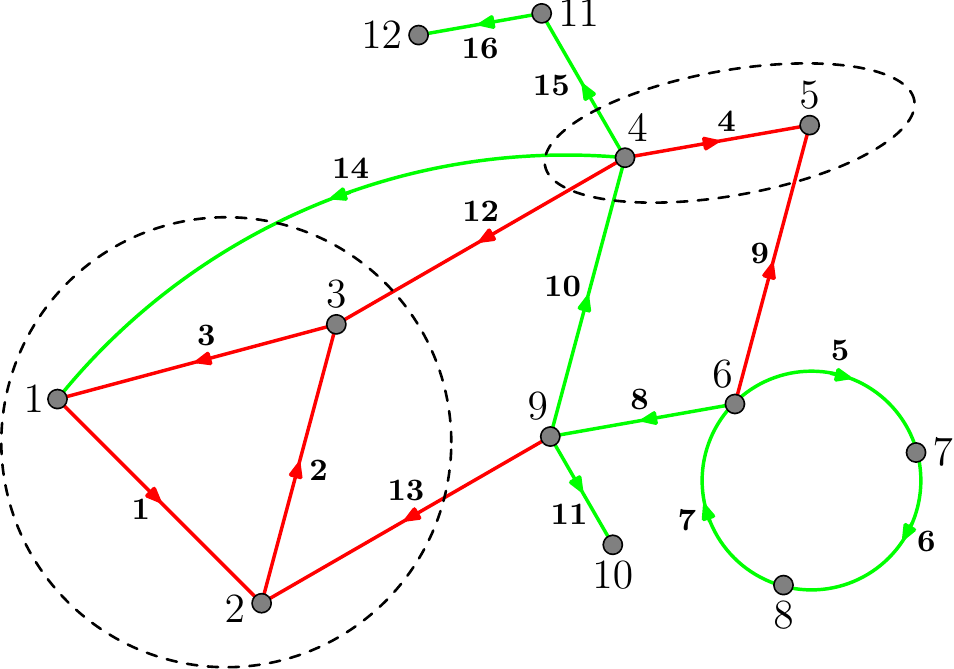}
\caption{A game in a $12$ person room with $\ell = 5$. The
Secret-Keeper adopts the Mole Hiding Strategy, and holds
the Interrogator to a draw. Questions are numbered in bold. 
The connected
component of the subgraph~$G$ 
are marked.} 
\end{figure}


The name of this strategy comes from the 
Interrogator's time-consuming search
through $G'$ for  hidden spies, and through~$G$ 
for hidden knights.
The proof of the following proposition shows
that this search
is unavoidable.

\begin{proposition}\label{prop:game}
If the Secret-Keeper follows the Mole Hiding 
Strategy then, at every point in the game, there is a subset of
people that
can consistently be the set of spies in the room. Moreover,
at the 
beginning of each turn $t$ with $t \le n + \ell - 1$, there
are two different such subsets.
\end{proposition}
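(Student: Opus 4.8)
The plan is to prove both assertions simultaneously by tracking, at each point in the game, the family of subsets $S \subseteq \{1,\ldots,n\}$ that can consistently serve as the complete set of spies. A subset $S$ is \emph{consistent} at a given stage if $|S| \le \ell$ and every answer given so far is compatible with $S$ being the spy-set: that is, whenever person $i$ has answered about person $j$, if $i \notin S$ (a knight) then the answer must be truthful, and if $i \in S$ (a spy) then no constraint is imposed. The Mole Hiding Strategy is designed so that the Secret-Keeper's answers never rule out too many such sets, and the whole task is to exhibit, at the start of each turn $t \le n+\ell-1$, at least two distinct consistent spy-sets, while showing that at least one consistent set always survives.

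First I would analyse Phase~1. Since the first $\ell - 1$ answers are all accusations, any consistent spy-set must, for each such accusation by person $i$ about person $j$, contain either $i$ or $j$ (otherwise a knight would have lied). I would record that after Phase~1 the graph $G$ has some number $c$ of connected components on, say, $v$ vertices, using a total of $\ell-1$ edges; a short count gives $v \ge \ell - 1$ with the deficiency controlled by $c$ and any cycles. The key structural observation is that to ``kill'' all the accusation-edges inside a component $G_i$ one needs a vertex cover of $G_i$, and the minimum such cover has a predictable size; the Secret-Keeper's Phase~2 rule (support on $G'$, accuse within each $G_i$ until the last unasked vertex) is precisely engineered so that the surviving consistent sets correspond to choosing, in each component, a small set of spies covering its edges, together with no spies forced among the supported $G'$-vertices.

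The heart of the argument is the inductive maintenance of two distinct consistent sets through Phase~2. I would set up an invariant: at the start of each turn $t \le n+\ell-1$, there remain at least two components-or-regions in which the identity of some person is still genuinely undetermined, so that one can toggle a single person between knight and spy (adjusting within the same component to preserve edge-consistency and the bound $|S| \le \ell$) to produce a second valid set. Each Phase~2 question about a $G'$-person is answered ``support,'' which only forces that person's identity to agree with the (as-yet-unpinned) identity of the asker, leaving a genuine binary choice until the very last vertex of a region is probed; each question within a component $G_i$ is answered ``accuse'' until only one unasked vertex remains, and the ``unless'' clause switching to ``support'' at that moment is exactly what prevents the component from being fully resolved one question too early. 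I would verify that with these rules the Interrogator must expend at least one question per person in $G'$ and must completely exhaust each component $G_i$ (all but its last internal ambiguity) before pinning it down, and that the arithmetic $n + \ell - 1 = (n - v) + (\text{Phase-1 count}) + (\text{Phase-2 cost})$ works out so that ambiguity persists right up to turn $n+\ell-1$.

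I expect the main obstacle to be the bookkeeping in the inductive step when the Interrogator's question connects a $G'$-vertex to a component, or joins two previously separate regions, since such a question can collapse several consistent sets at once; the delicate point is to show that even then at least two consistent spy-sets survive, which is where the precise placement of the ``unless'' clause and the strict inequality $\ell < n/2$ (guaranteeing enough room to keep $|S| \le \ell$ while toggling) must be used. Handling the boundary case, the transition into turn $n+\ell$ where only a single consistent set should remain so that the game is a draw rather than a loss, will require checking that the last toggle is eliminated by exactly the final admissible question, and I would treat that as the culminating verification of the count.
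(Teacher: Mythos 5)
Your plan points in the right general direction (exhibit one consistent spy-set, then toggle a single undetermined person to get a second), but as written it has genuine gaps and at least one inverted inequality. First, the counting is backwards: you assert that ``a short count gives $v \ge \ell - 1$,'' but what the argument actually needs is an \emph{upper} bound on the size of the witness spy-set (and $v \ge \ell-1$ is in any case false when the Interrogator re-covers the same underlying edges). The paper's witness is completely explicit: $S = G \setminus \{k_1,\ldots,k_c\}$, where $k_i$ is the unique person of $G_i$ supported in Phase~2 if everyone in $G_i$ has already been asked about, and otherwise any person of $G_i$ not yet asked about. Then $|S| = \sum_i (v_i - 1) \le \sum_i e_i = \ell - 1$, using $e_i \ge v_i - 1$ for each connected component; this leaves one spare spy for the toggle. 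Your ``minimum vertex cover'' framing is a red herring: a small vertex cover of the Phase-1 accusation edges is generally \emph{not} consistent with the Phase-2 answers, because every Phase-2 accusation whose asker is a proposed knight forces its target to be a spy, and these accusations sweep up essentially all of each component except the one supported (or still unasked-about) vertex. You also never carry out the one verification on which everything rests: that with $K = G' \cup \{k_1,\ldots,k_c\}$, every support uttered by a member of $K$ targets a member of $K$, and every accusation uttered by a member of $K$ targets a member of $S$.

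Second, your inductive invariant (``at least two components-or-regions in which some identity is still genuinely undetermined'') is never made precise, and no induction over turns is needed; the worry about collapsing several consistent sets at once dissolves because the argument is a direct verification at each turn. At the start of turn $t \le n + \ell - 1$, at most $n-1$ questions have been asked in Phase~2, so some person $x$ has not been asked about in Phase~2; choosing $k_i = x$ if $x \in G_i$, one checks that $S \cup \{x\}$ (if $x \notin S$, which is where $|S| \le \ell - 1$ is used) or $S \setminus \{x\}$ (if $x \in S$) is a second consistent set. Your closing concern about turn $n+\ell$ is not part of the statement and need not be verified. As it stands the proposal has the right shape, but the explicit witness set, the knight-consistency check, and the correct direction of the key inequality all still need to be supplied.
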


\begin{proof}
Suppose we are at the start of turn~$t$.
Since extra questions can only increase the requirements
a consistent assignment of identities to the people in
the room must satisfy, we 
may assume without loss of generality that~$t \ge \ell - 1$.
Hence the subgraph $G$ is defined.

For each component $G_i$ of $G$, if
the Secret-Keeper
has already asked about everyone in $G_i$, then
let person~$k_i$ be the unique person who has
been supported in Phase~2 of the game. Otherwise,
choose for $k_i$ 
any person in~$G_i$ who has not
yet been asked about.
Let
\[ S = G \setminus \left\{ k_1, \ldots, k_c \right\},\]
and let $K$ be the complement,
\[ K = G' \cup \left\{ k_1, \ldots, k_c \right\}. \]

Let $k \in K$ and let $y \in \{1,2,\ldots,n\}$. If the Secret-Keeper has told the Interrogator
that person~$k$ supports person~$y$, then this question
must have occurred in Phase~2 of the game, and either
$y \in \{k_1, \ldots, k_c\}$ or~$y \in G'$. Hence $y \in K$.
Similarly, if the Secret-Keeper
has told the Interrogator that person~$k$ accuses person~$y$, then
$y \in S$. Hence, provided that $S$ is not too large,
the Secret-Keeper's answers
are consistent with~$S$ being the full set
of spies.

Suppose that the connected component $G_i$ contains $v_i$~people
and has~$e_i$ edges. 
The number of questions
asked in Phase~1 of the game is $e_1 + \cdots + e_c = \ell -1$. 
By a standard
result, $e_i \ge v_i - 1$, and hence
\begin{align*} 
|S| &= (v_1-1) + (v_2 -1) + \cdots + (v_c-1)  \\
    &\le e_1 + \cdots + e_c \\
    &= \ell -1.
\end{align*}
Therefore we even have one spy left to play with.

Now suppose that $t \le n + \ell - 1$. At most $n-1$ questions have been
asked in Phase~2 of the game, so there is some person, say person $x$,
who has not been asked about in this phase. We may assume that
if $x$ belongs to $G$, say with $x \in G_i$, then we chose $k_i = x$.
We shall use person $x$ to construct a set~$S^\star$, different from~$S$,
that can also be taken as the set of spies. There
are two cases to consider.

If $x \not\in S$ then let $S^\star = S \cup \{x\}$. By our choice
of $x$, person $x$ has never been supported by anyone in the room,
so it is consistent that he is a spy.
Since $|S^\star| = |S| + 1 \le \ell$,
it is consistent that $S^\star$ is the set of spies.

If $x \in S$ then let $S^\star = S \setminus \{x \}$.
Person $x$ has only been accused by people in $S^\star$. Moreover,
one easily checks that person $x$ has accused only people in~$S^\star$,
and supported only people not in $S^\star$. Hence it is consistent that 
person~$x$
is a knight, and that~$S^\star$ is the set of spies.
\end{proof}

\vbox{
It follows from the first part of Proposition~\ref{prop:game}
that the Secret-Keeper can adopt the Mole Hiding Strategy without
breaking the rules of the game. The second part shows that the 
Interrogator will be unable to claim before she has asked
$n + \ell - 1$ questions.
Theorem~2 is an immediate corollary.}

\subsection{Final remarks on the game}
We end this section with two remarks on the game
we have introduced, each with a hint of the paradoxical.

Firstly, the author's experience is
that 
most players expect to find it easier to play as the 
Secret-Keeper than the Interrogator, but, to their surprise, find
that after the first few games, the reverse is true. Since it is far from
obvious that $n + \ell - 1$ questions suffice, this
seems somewhat remarkable. 

Secondly we note that
the Mole Hiding Strategy is optimal
(in the game-theoretic sense) since it guarantees
to hold the Interrogator to \hbox{$n + \ell - 1$} questions, which,
given the existence of the Spider Interrogation Strategy, 
is the best the Secret-Keeper can hope for. 
This is not to say however, that the Mole Hiding Strategy
cannot be improved. Its defect is that it does not
punish bad play on the part of the Interrogator as harshly
as is possible.

For example, in the game shown in Figure~6, 
the Interrogator's third question was in fact a blunder, after
which the Secret-Keeper can, by extending Phase~1 of the
game for an extra question, force the Interrogator
to ask~$17$ questions. 
This changes the outcome of the game
from a draw into victory for the Secret-Keeper.
More generally, if the 
Secret-Keeper is willing to depart
from the strict letter of the Mole Hiding
Strategy, she
can win any game in which the Interrogator's
questions during Phase~1 form an undirected cycle.
It would be interesting
to know what other early plays by the Interrogator can be punished.

\section{Cycles and chains}

In \S 2 we noted that, no matter how the spies are arranged in the
room, they can ensure that a questioner following the Spider Interrogation
Strategy asks~$n + \ell - 1$ questions.
It is natural to ask
whether there is a questioning strategy which never uses more
than $n + \ell - 1$ questions, and also has 
a reasonable probability
of using fewer, no matter how cleverly the spies answer.

\subsection{A partial result}
When~$\ell$ is small compared to~$n$ this question---in
one interpretation at least---has an affirmative
answer. This can be shown by modifying the Spider Interrogation
Strategy; we give the required changes in outline.

\subsubsection*{Step~1}
Ask
person~$1$ about person~$2$, then person~$2$
about person~$3$, and continue in this manner, until either we meet
an accusation, or we have asked~$\ell$ questions. In the latter case,
person~$\ell + 1$ must be a knight. 
If we simply ask him
about everyone else in the room, then we find everyone's identity
in $n + \ell - 1$ questions. Moreover, if 
we begin by asking about person~$1$
then, in the event that he transpires to be a knight, the resulting
cycle in the question graph tells us that the first $\ell+1$ people
are all knights. A further $n-(\ell+1)$ questions
find all the remaining identities, giving a total of just~$n$ questions.

In the former case, suppose that
person~$t$ accused person $t+1$.
If \hbox{$t=1$}, then we have not yet departed
from the normal Spider Interrogation Strategy.
If~\hbox{$t > 1$} then
treat person~$t$ as a candidate who
has been supported by~$t-2$ people, and continue to question
new people about him. If eventually he is rejected,
after having been accused by~$a$ different people, then the resulting 
spider contains $2(a+1)$ people,
of whom at least~$a+1$ are spies. 
The threshold for acceptance
of the next candidate is therefore $\ell - (a+1)$. Now follow
Step~1 of the unmodified strategy.

\subsubsection*{Steps 2,3 and 4} These are analogous to the unmodified
strategy. The reader may check that,
 once
the identity of person~$t$ has been determined,
$a+1$ questions 
suffice to find all
the identities of the people in the first spider.
It therefore follows, along similar lines to Proposition~\ref{prop:sis},
that it is possible to determine 
everyone's identities in~$n + \ell - 1$ questions.
Figure~7 below shows an illustrative example.

\begin{figure}[h!]
\begin{center}
\includegraphics{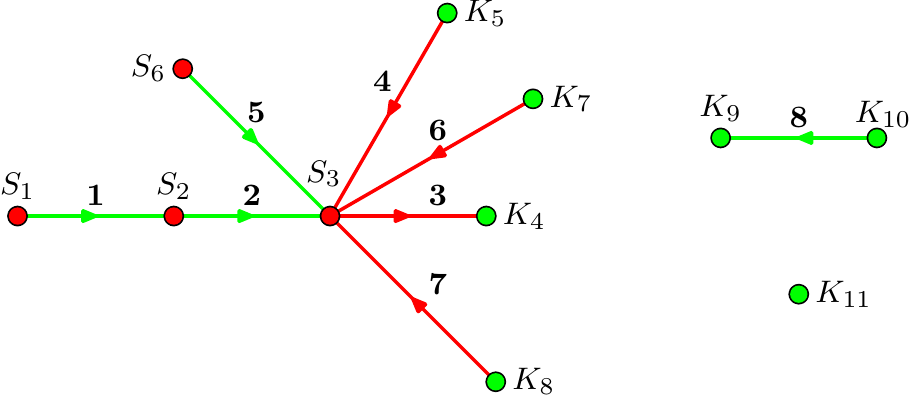}
\end{center}
\caption{The end of Step~1 in the
 modified Spider Interrogation Strategy in an $11$ person
room with $\ell = 5$, in which spies act knavishly. 
The first candidate $S_3$ is rejected, and
the second $K_{9}$ is accepted. In Step~2, the knight $K_9$ will be asked
about $S_3$ and $K_{11}$, and
in the modified version of Step~3, he will be asked about 
his fellow knights, $K_4$, $K_5$, $K_7$, $K_8$ and $K_{10}$.
The full~$15$ questions are required.}
\end{figure}
 
The event that none of the first $\ell+1$ people in the room
is a spy has probability at least
\[ g_\ell(n) = \Bigl( 1 - \frac{\ell}{n-\ell}\Bigr)^{\ell + 1}. \]
For fixed $\ell$, the lower bound $g_\ell(n)$ is an increasing function of $n$.
Moreover, 
\[ h(\ell) = g_\ell(\ell^2) =
\Bigl( 1 - \frac{1}{\ell - 1}\Bigr)^{\ell + 1} \]
is an increasing function of $\ell$ for $\ell \ge 2$, tending to $1/\mathrm{e}$
as $\ell \rightarrow \infty$. Calculation shows that $h(9) \ge 1/4$,
and hence
$g_\ell(n) \ge 1/4$
whenever $9 \le \ell \le \sqrt{n}$. We can therefore
use the modified Spider Interrogation
Strategy to prove the following conjecture,
\emph{subject to the extra hypothesis that} $\ell \le \sqrt{n}$.

\begin{conjecture}\label{conj:prob}
Let $s \le \ell < n/2$.
There is a questioning strategy which, provided $\ell$ is sufficiently
large, guarantees to use at
most $n + \ell - 1$ questions to find all identities in an $n$-person room
containing $s$ spies, and will on average use at most
$n + 3 \ell / 4$ questions.
\end{conjecture}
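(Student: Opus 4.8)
The plan is to separate the two requirements and to reduce the harder, average-case bound to a statement about the \emph{placement} of the spies alone. The worst-case guarantee is essentially free: any strategy that, after its opening moves, reverts to the (modified) Spider Interrogation Strategy inherits from Proposition~\ref{prop:sis} the bound of $n+\ell-1$ questions, whatever the spies do. So the real content is the average, where I would measure everything in terms of \emph{questions saved} against the worst case $n+\ell-1$; the target $n+3\ell/4$ amounts to an expected saving of at least $(\ell-1)/4$, taken over a uniformly random arrangement of the $s$ spies and the \emph{worst} response of the spies to that arrangement.

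The key structural observation, already implicit in the partial result, is that savings must be \emph{forced by the placement}, not merely made available: if the spies answer truthfully (the ``always support'' behaviour) then no candidate is ever rejected, and by Proposition~\ref{prop:sis} no question is saved. Thus I would look for a \emph{good event} $E$, depending only on the random positions of the spies, on which the questioning strategy is compelled to save $\ell-1$ questions no matter how the spies answer. If one can exhibit such an $E$ with $\Pr[E]\ge\tfrac14$, then the expected saving is at least $(\ell-1)/4$ and the conjecture follows. The partial result is exactly this scheme with $E=\{\text{the first }\ell+1\text{ people are all knights}\}$: on $E$ the opening chain closes into a cycle of $\ell+1$ knights, forcing the run to finish in $n$ questions, and $\Pr[E]\ge g_\ell(n)\ge\tfrac14$ holds precisely when $9\le\ell\le\sqrt n$.

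The main obstacle is therefore the regime $\ell\gg\sqrt n$, where this event has probability $g_\ell(n)\approx e^{-\ell^2/n}\to 0$ and an entirely new good event is needed. I expect this to be the genuine crux, because the two mechanisms we understand pull in opposite directions. A \emph{support-cycle} certifies its members as knights only if its length exceeds $\ell$, since a shorter support-cycle is consistent with all its members being mutually lying spies; so the only placement-forced route to an $\Omega(\ell)$ saving seems to require a spy-free run of length $\ell+1$, which is exponentially unlikely once $\ell^2>n$. The alternative source of savings, rejected knights, yields on average only the $O(\sqrt\ell)$ of Theorem~\ref{thm:strat} and, as noted above, collapses to nothing against truthful spies.

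A plausible line of attack on this regime is to break the global length-$(\ell+1)$ requirement into many local ones: partition the room into blocks, run a chain--cycle probe inside each, let every block that happens to be spy-free contribute its own forced saving, and argue by concentration that with probability at least $\tfrac14$ enough blocks succeed to accumulate a total saving of order $\ell$. The difficulties I would expect to dominate are that the spy budget is global rather than per-block, so an adversarial, or merely unlucky, placement can concentrate spies and correlate block failures; that a spy-free block of size $m$ forces a cycle only when $m$ exceeds the number of spies it could conceivably contain, re-importing the same length barrier at a smaller scale; and that the savings from several partial spiders must be shown to add, rather than be double-counted against the single global overhead of $\ell-1$ that Proposition~\ref{prop:sis} accounts for. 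Reconciling these is, I believe, exactly why the statement is left as a conjecture.
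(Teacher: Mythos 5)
The statement you have been asked to prove is labelled a \emph{conjecture} in the paper, and the paper does not prove it: it establishes the bound only under the additional hypothesis $\ell \le \sqrt{n}$, via the modified Spider Interrogation Strategy and the estimate $g_\ell(n) \ge 1/4$ for $9 \le \ell \le \sqrt{n}$, and for larger $\ell$ offers only numerical evidence from the Chain Building Strategy. Your proposal is correctly calibrated in not claiming a complete proof, and your treatment of the $\ell \le \sqrt{n}$ regime is exactly the paper's partial result, down to the choice of good event (the first $\ell+1$ people all knights, closing the opening chain into a cycle and forcing a run of $n$ questions) and the probability bound.

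Where your analysis goes astray is in the structural claim that any saving must be \emph{forced by the placement alone}, on the grounds that truthful spies cause no candidate to be rejected and hence, by Proposition~\ref{prop:sis}, no saving. That reasoning is specific to the Spider Interrogation Strategy, and your resulting dichotomy --- either a spy-free run of length $\ell+1$ (exponentially unlikely once $\ell^2 > n$) or the $O(\sqrt{\ell})$ expected saving from rejected knights --- is not exhaustive. The paper's proposed mechanism for large $\ell$, the Chain Building Strategy, draws its savings from a third source: a chain of $k$ people each supporting the next has only $k+1$ possible configurations, so once a knight is to hand its members can be identified by bisection in $\lfloor \log_2 k \rfloor + 1$ questions rather than $k$. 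Against that strategy, truthful (supportive) spies are not worst-case at all --- they create long chains that are cheap to resolve --- while accusatory spies give themselves away; this is why the paper's simulations show roughly $4n/3$ questions on average when $\ell = \lfloor (n-1)/2\rfloor$, comfortably inside $n + 3\ell/4$. Consequently your identification of $\ell \gg \sqrt{n}$ as the crux does not match the paper's own assessment, which is that the intermediate regime (e.g.\ $\ell \approx n/4$), where neither cycles nor long chains are plentiful, presents the largest obstacle. None of this makes your proposal incorrect --- the conjecture remains open in the paper too --- but the idea missing from your account, and the one the paper leans on, is the logarithmic resolution of chains as a source of savings that truthful spies cannot withhold.
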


The game-playing setting for Conjecture~\ref{conj:prob} is
the variant form of 
`Knights and Spies', in which the numbers of the spies
are randomly chosen at the start of the game,
and the Secret-Keeper's only responsibility is to decide
on their answers. Note that the information that exactly
$s$ spies are present is \emph{not} revealed to the Interrogator, and
need not by honoured by the Secret-Keeper when refuting a claim.
A similar conjecture, in which 
the number of spies was itself a random quantity $\le \ell$,
could also be stated.

The numerical results presented in the following
section suggest that Conjecture~\ref{conj:prob}
also holds when $\ell$ takes its largest possible
 value of $\lfloor (n-1)/2 \rfloor$.

\vspace{0pt}

\subsection{The Chain Building Strategy}
A chain of people, each of its members supporting
the next person along, is almost as valuable a configuration as the cycle
potentially created by the previous strategy.
Any such chain consists of a number (possibly zero) of spies, 
followed by a number (again possibly zero) of knights.
There are~\hbox{$k+1$} possible configurations for a chain
of length~$k$. Provided we have a knight to hand,
its members can be identified using
repeated
bisection
in a mere~$\lfloor \log_2 k \rfloor + 1$ 
questions; this meets the theoretical minimum for binary questions.
An example is shown in Figure~8 below.


\begin{figure}[h!]
\begin{center}
\includegraphics{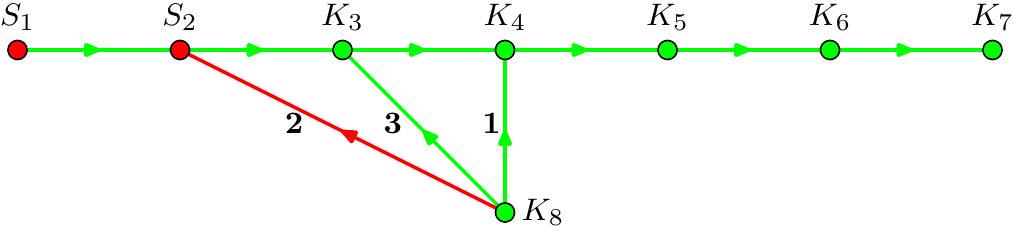}
\end{center}
\caption{Person $8$ is known to be a knight. Three questions
to him suffice to find all identities in the chain formed by persons $1$ to~$7$.}
\end{figure}

We now give a  rough outline of the \emph{Chain Building Strategy},
in which these chains play a fundamental role.
In the first step of the Chain Building Strategy
we hunt for someone who we 
can guarantee is a knight by first building chains,
starting a new chain as soon as we meet an accusation.
We then recursively link these chains by asking further questions 
(targeting people with the most persuasive support so far)
and stopping as soon as we reach someone who must
be a knight.
In the second step we use our guaranteed knight
to find everyone else's identity, exploiting the existing
chains as much as possible. 
An example of the critical first step is shown in Figure~9 above.

\begin{figure}[t]
\begin{center}
\includegraphics{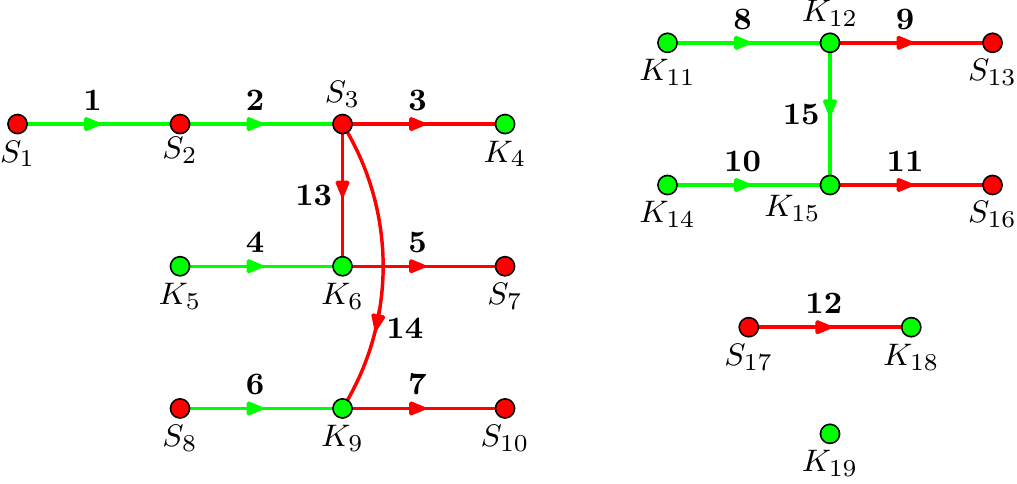}
\end{center}
\caption{The question graph after the first step of the Chain Building
Strategy in a room with $10$ knights and 
$9$ spies. Spies act knavishly, with the exception of
$S_{8}$, who we suppose answers truthfully when asked about $K_{9}$.
As in Step~1 of the Spider Interrogation Strategy,
the members of the components containing~$S_{1}$ and~$S_{17}$ are disregarded
once it becomes clear (after questions~14 and~12 respectively)
that they contain at least as many spies as knights.
The first step ends after question~$15$,
after which we can be sure that person~$K_{15}$ is a knight. In Step~2,
he will first be asked about $S_3$, bisecting
the longest chain.} 
\end{figure}

Simulation---both by hand, and by computer\footnote{Objective-C
source code for a program capable of simulating all the questioning strategies
discussed in this paper is available from 
\url{http://www.maths.bris.ac.uk/~mazmjw}.}---of the Chain Building Strategy
strongly suggests that, 
provided the behaviour of the spies is constrained in some way, 
or randomised entirely, 
it never requires more than $n + \ell - 1$ questions to find everyone's
identity. The numerical evidence also
suggests that Chain Building Strategy requires on average about
$4n/3$ questions to deal with a room in which knights are only just
in the minority, more than meeting the requirements of 
Conjecture~\ref{conj:prob}. Sadly, it appears that when~$\ell$ 
is a smaller fraction of $n$, for example, $\ell = n/4$, the
strategy is less effective. Some of the relevant
data is presented in Figures~10 and~11 overleaf.

At the time of writing, these
intermediate values for $\ell$ seem to present the largest
obstacle in the path to a proof of Conjecture~\ref{conj:prob}.

\begin{figure}[p]
\begin{center}
\includegraphics{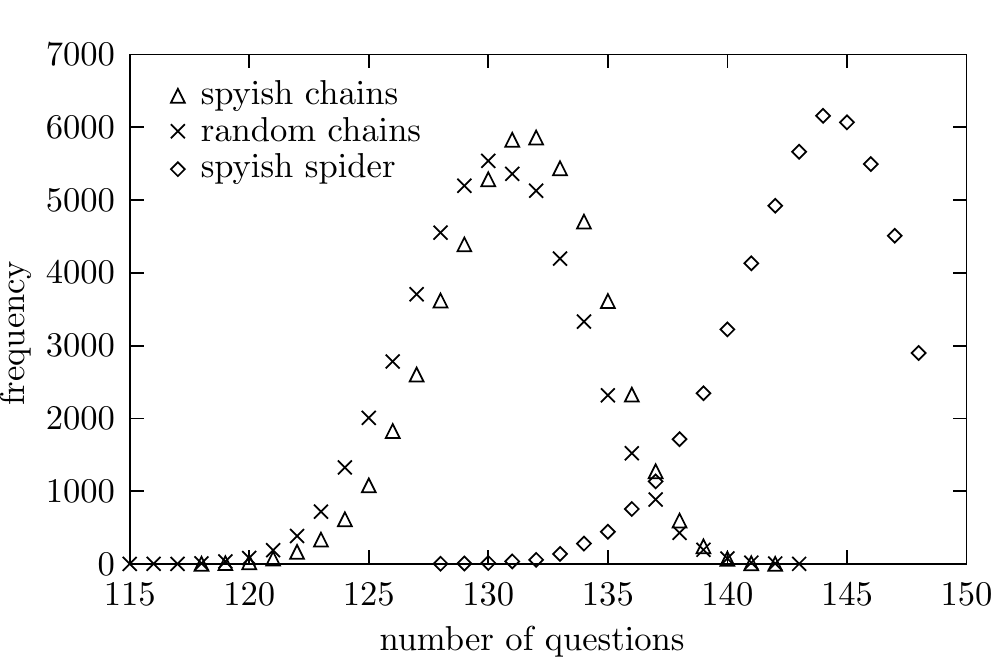}

\vspace{-6pt}
\end{center}
\caption{Numbers of questions asked
in $25000$ runs of the Chain Building Strategy in random generated rooms
with $51$ knights and $49$ spies. Results for spyish spies, and spies
which answer randomly are shown.
One might expect the Chain Building Strategy to fare significantly
worse when faced with spyish spies, since this behaviour certainly makes
it harder to form long chains. However, the difference is surprisingly
unpronounced, perhaps because the spies are prone to give themselves
away by their excessive accusations. 
%
For comparison, the corresponding
results obtained from simulation of the Spider Interrogation Strategy 
with spyish spies
are also shown.
}
\end{figure}

\begin{figure}[p]
\begin{center}
\includegraphics{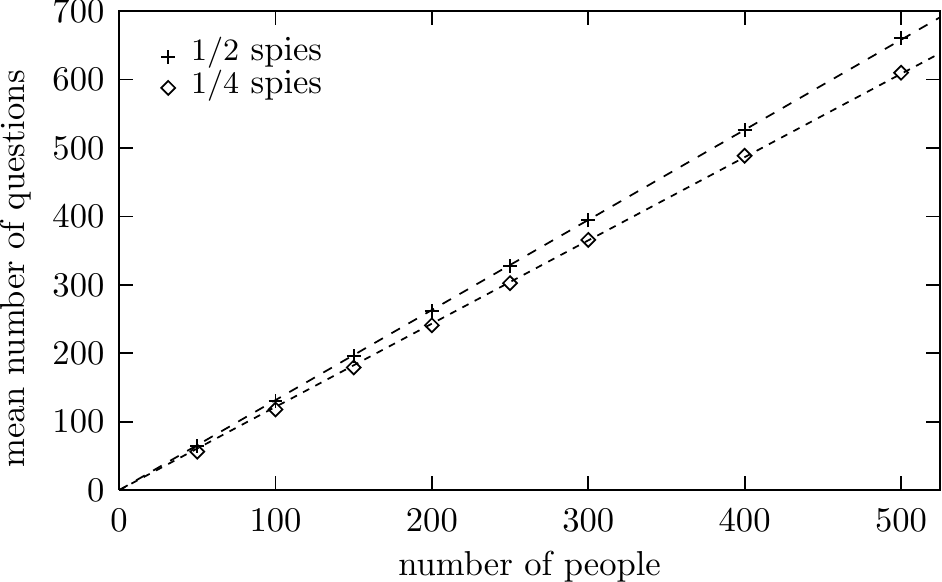}
\vspace{-6pt}
\end{center}
\caption{Mean number of questions asked in $1000$ runs
of the Chain Building Strategy in randomly generated rooms with $n$
people when $\ell = \lfloor (n-1)/2 \rfloor$ and $\lfloor n/4 \rfloor$ respectively.
In each case $\ell$ spies were present.
The gradients of the interpolating lines are $1.316$ and $1.217$ respectively.
Spies answered spyishly;
other constraints on their behaviour gave the same linear behaviour, with
similar gradients.}
\end{figure}


\section{Open problems and variant games}
We end by presenting two further open problems, which seem worthy
of attention, and may well
be more tractable than Conjecture~\ref{conj:prob}. 

\begin{problem}\label{prob:prop}
In a $4k$ person room known to contain exactly $k$ knights,
what is the smallest number of questions that will give
a probability $\ge 1/5$ of correctly identifying
every person?
\end{problem}

Now that we have dropped our long-standing assumption
that  spies are in the minority, we can no longer 
guarantee to find everyone's identity. 
However,
there is still a chance of success. Indeed, if we ask
all $n(n-1)$ useful questions, then the spies must
be careful not to give themselves away by forming
a block of $> k$ people, all of its members supporting one another.
Instead, in the worst case we are left with
four 
camps each of $k$ people, each camp
behaving as if they are the 
knights, and the opposing camps are the spies.
Choosing a camp
at random gives a~$1/4$ chance of success.   
Problem~\ref{prob:prop} asks whether, 
if we accept a smaller chance of success,
we might be able to manage with significantly fewer questions.

\begin{problem}\label{prob:single}
Let $\ell < n/2$.
In an $n$ person room with at most $\ell$ spies, what
is the smallest number of questions that will guarantee
to find at least one person's identity? What is the
smallest number of questions that will guarantee 
to find a knight?
\end{problem}

For example, given the sequence of questions shown in
Figure~1, we can be sure after question 15 that person~15
is a knight (and also that person~18 is a spy), but before
this question we cannot be certain of any single identity.

The Spider Interrogation Strategy shows that $2\ell - 1$ questions
suffice to find a knight. This gives an upper bound for both
parts of Problem~\ref{prob:single}. For a lower bound, it 
is natural to pose the problem in the 
game-playing framework
of~\S 3. The Mole Hiding
Strategy shows that~$\ell$ questions are necessary,
but cannot otherwise be recommended,
for if the Interrogator
follows the Spider Interrogation Strategy, then after she has asked
these~$\ell$ questions, she will be able to claim.


The author conjectures that the answer to the first part---and 
hence to both parts---of
Problem~\ref{prob:single} is $2\ell - 1$. If so, we
face the remarkable situation that, while we can find the identity
of a particular person, nominated in advance, with~$2\ell$ questions, we can only save one question
if the person is entirely of our choosing, to be nominated later.

In his famous
`\negvthinspace\emph{A Mathematician's Apology}' \cite[\S 15--17]{HardyApology},
G.~H.~Hardy argued that serious mathematics could be distinguished by virtue
of its
depth and generality, and also by a certain `\negvthinspace\emph{unexpectedness},
combined with \emph{inevitability} and \emph{economy}' (his emphasis).
The reader who has read this far will, it is hoped, agree
that the Knights and Spies Problem deserves to qualify under 
all of his criteria.

\medskip
\end{document}